\def\cite{\citet}
\theoremstyle{plain}
\newtheorem{theorem}{Theorem}[section]
\newtheorem{corollary}[theorem]{Corollary}
\theoremstyle{definition}
\numberwithin{equation}{section}
\renewcommand\section{\@startsection {section}{1}{\z@}%
{-3.5ex \@plus -1ex \@minus -.2ex}%
{1.3ex \@plus.2ex}%
{\center\small\sc\MakeTextUppercase}}
\def\subsection#1{\@startsection {subsection}{2}{0pt}%
{-3.5ex \@plus -1ex \@minus -.2ex}%
{1ex \@plus.2ex}%
{\bf\mathversion{bold}}{#1}}
\def\subsubsection#1{\@startsection{subsubsection}{3}{0pt}%
{\medskipamount}%
{-10pt}%
{\normalsize\itshape}{\kern-2.2ex. #1.}}
\def\be#1{\begin{equation*}#1\end{equation*}}
\def\ben#1{\begin{equation}#1\end{equation}}
\def\bes#1{\begin{equation*}\begin{split}#1\end{split}\end{equation*}}
\def\besn#1{\begin{equation}\begin{split}#1\end{split}\end{equation}}
\def\given{\mskip 0.5mu plus 0.25mu\vert\mskip 0.5mu plus 0.15mu}
\newcounter{bracketlevel}%
\def\@bracketfactory#1#2#3#4#5#6{%
\expandafter\def\csname#1\endcsname##1{%
\global\advance\c@bracketlevel 1\relax%
\global\expandafter\let\csname @middummy\alph{bracketlevel}\endcsname\given%
\global\def\given{\mskip#5\csname#4\endcsname\vert\mskip#6}\csname#4l\endcsname#2##1\csname#4r\endcsname#3%
\global\expandafter\let\expandafter\given\csname @middummy\alph{bracketlevel}\endcsname%
\global\advance\c@bracketlevel -1\relax%
}%
}
\def\bracketfactory#1#2#3{%
\@bracketfactory{#1}{#2}{#3}{relax}{0.5mu plus 0.25mu}{0.5mu plus 0.15mu}
\@bracketfactory{b#1}{#2}{#3}{big}{1mu plus 0.25mu minus 0.25mu}{0.6mu plus 0.15mu minus 0.15mu}
\@bracketfactory{bb#1}{#2}{#3}{Big}{2.4mu plus 0.8mu minus 0.8mu}{1.8mu plus 0.6mu minus 0.6mu}
\@bracketfactory{bbb#1}{#2}{#3}{bigg}{3.2mu plus 1mu minus 1mu}{2.4mu plus 0.75mu minus 0.75mu}
\@bracketfactory{bbbb#1}{#2}{#3}{Bigg}{4mu plus 1mu minus 1mu}{3mu plus 0.75mu minus 0.75mu}
}
\newcounter{ctr}\loop\stepcounter{ctr}\edef\X{\@Alph\c@ctr}%
\edef\csname s\X\endcsname{\noexpand\mathscr{\X}}
\edef\csname c\X\endcsname{\noexpand\mathcal{\X}}
\edef\csname b\X\endcsname{\noexpand\boldsymbol{\X}}
\edef\csname I\X\endcsname{\noexpand\mathbbm{\X}}
\let\@IE\IE\let\IE\undefined
\newcommand{\IE}{\mathop{{}\@IE}\mathopen{}}
\let\@IP\IP\let\IP\undefined
\newcommand{\IP}{\mathop{{}\@IP}}
\newcommand{\Var}{\mathop{\mathrm{Var}}}
\newcommand{\law}{\mathop{{}\sL}\mathopen{}}
\let\original@left\left
\let\original@right\right
\renewcommand{\left}{\mathopen{}\mathclose\bgroup\original@left}
\renewcommand{\right}{\aftergroup\egroup\original@right}
\def\^#1{\relax\ifmmode {\mathaccent"705E #1} \else {\accent94 #1} \fi}
\def\~#1{\relax\ifmmode {\mathaccent"707E #1} \else {\accent"7E #1} \fi}
\def\*#1{\relax#1^\ast}
\edef\-#1{\relax\noexpand\ifmmode {\noexpand\bar{#1}} \noexpand\else \-#1\noexpand\fi}
\def\>#1{\vec{#1}}
\def\.#1{\dot{#1}}
\def\atop{\@@atop}
\def\%#1{\mathcal{#1}}
\renewcommand{\leq}{\leqslant}
\renewcommand{\geq}{\geqslant}
\renewcommand{\phi}{\varphi}
\newcommand{\eps}{\varepsilon}
\newcommand{\N}{\mathop{{}\mathrm{N}}}
\newcommand{\eq}{\eqref}
\newcommand{\I}{\mathop{{}\mathrm{I}}\mathopen{}}
\newcommand{\dw}{\mathop{d_{\mathrm{W}}}\mathopen{}}
\newcommand{\dk}{\mathop{d_{\mathrm{K}}}\mathopen{}}
\newcommand\indep{\protect\mathpalette{\protect\@indep}{\perp}}
\def\@indep#1#2{\mathrel{\rlap{$#1#2$}\mkern2mu{#1#2}}}
\newcommand{\toinf}{\to\infty}
\def\parsetime#1#2#3#4#5#6{#1#2:#3#4}
\def\parsedate#1:20#2#3#4#5#6#7#8+#9\empty{20#2#3-#4#5-#6#7 \parsetime #8}
\def\moddate{\expandafter\parsedate\pdffilemoddate{\jobname.tex}\empty}
\begin{document}

\title{\sc\bf\large\MakeUppercase{On quantitative bounds in the~mean~martingale~central limit theorem}}
\author{\sc Adrian R\"ollin}

\date{\itshape National University of Singapore}

\maketitle

\begin{abstract}
\noindent We provide explicit bounds on the Wasserstein distance between discrete time martingales and the standard normal distribution. The proofs are based on a combination of Lindeberg's and Stein's method.
\end{abstract}

\section{Introduction}

Let $X_1,\dots,X_n$ be a martingale difference sequence, that is, a sequence of random variables adapted to a filtration $\cF_0,\dots,\cF_n$ such that
\ben{\label{1}
    \IE(X_k|\cF_{k-1}) = 0 \qquad\text{almost surely for $1\leq k\leq n$,}
}
and let
\be{
   S_0:=0,\qquad S_k := X_1+\dots+X_k \quad\text{for $ 1\leq k\leq n$,}
}
be the resulting discrete-time martingale. For $1\leq k\leq n$, define the quantities
\be{
  \sigma^2_k = \IE(X_{k}^2|\cF_{k-1}),
  \qquad   
  \-\sigma^2_k := \IE X_{k}^2 = \IE\sigma^2_k,  
  \qquad 
  s^2_n := \Var S_n = \sum_{i=1}^n \bar\sigma_i^2,
}
and for $0\leq k\leq n$ define
\be{
  V_k^2 := \sum_{i=1}^k \sigma^2_i,
  \qquad
  \rho^2_{k+1} :=V_n^2 - V_{k}^2 =  \sum_{i=k+1}^n \sigma^2_i,
}
where $\sum_{i=a}^b$ is defined to be zero if $a>b$.

The asymptotic behaviour of $S_n/s_n$ has already been intensively studied for many decades, probably starting with \cite{Billingsley1961} and \cite{Ibragimov1963}, who proved central limit theorem in certain special cases; we refer to the classical textbook \cite{Hall1980}. Bounds on the quality of normal approximation were also obtained, such as by \cite{Ibragimov1963} and \cite{Heyde1970} with respect to the Kolmogorov distance, on which most later work has focused, too. For example, it was shown by \cite{Bolthausen1982a} that rates of order $n^{-1/4}$ as $n\toinf$ are sharp if uniformly bounded third moments are assumed, even under the strong assumption that the conditional variances satisfy $\sigma_k^2 = \bar\sigma_k^2$ almost surely. For uniformly bounded random variables and assuming only that 
\ben{\label{2}
    V^2_n=s_n^2 \quad\text{almost surely,}
}
\cite{Bolthausen1982a} improved the rate to $n^{-1/2}\log n$, for which he again showed that it is sharp. Various embellishments were obtained later; see, for example, \cite{Haeusler1988} and \cite{ElMachkouri2007} to name but a few. 

In contrast, bounds with respect to the Wasserstein distance are rare. To the best of our knowledge, the first result was obtained by \cite{Dedecker2008} in the case of stationary martingale differences, and later generalised by \cite{Van-Dung2014} under conditions akin to those asserted by \cite{Bolthausen1982a}. 

In the standard literature, proofs to obtain quantitative bounds in the martingale central limit theorems are often based on Lindeberg's telescoping sum argument. The individual differences in the sum are usually handled by Taylor expansion, followed by some sort of smoothing argument to obtain bounds with respect to the Kolmogorov or Wasserstein distances. As of now, there are no proofs based on Stein's method (\cite{Stein1972}), and the work on these notes was started with the intention to close this gap.
While we were not able to find a proof purely based on Stein's method, our basic approach is instead a combination of \emph{both} Lindeberg's and Stein's method. Our proofs also start with Lindeberg's telescoping sum, but we then use Stein's method to handle the individual differences in the sum. This seems to by-pass the tedious smoothing arguments appearing in many of the aforementioned articles, resulting in shorter proofs under weaker assumptions. In order not to just provide new proofs of already known results, and also in order to keep things simple, we restrict ourselves to the Wasserstein distance and consider conditions similar to those of \cite{Bolthausen1982a}.

\section{Main results}

Let $\dw(F,G)$ denote the Wasserstein distance between two distributions $F$ and $G$ on the real line, which are assumed to have finite first moments. This distance is defined as $\dw(F,G)=\sup_{h}\abs{\int hdF - \int hdG }$, where the supremum ranges over all $1$-Lipschitz-continuous functions $h$. For distributions on the real line, we can alternatively write $\dw(F,G)=\int\abs{F(x)-G(x)}dx$; see \cite{Vallender1973}. The following is our main result, from which we then deduce various corollaries.

\begin{theorem}  \label{thm1}
Assume that $V_n^2 = s_n^2$ almost surely. Then, for any $a\geq 0$,
\ben{                                              \label{3}
    \dw\bclr{\law(S_n/s_n),\N(0,1)} \leq   
    \frac{3}{s_n}
    \sum_{k=1}^n\IE\frac{\abs{X_k}^3}{\rho_k^2+a^2} + \frac{2a}{s_n}.
}
\end{theorem}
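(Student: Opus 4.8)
The plan is to combine Lindeberg's telescoping sum with Gaussian smoothing supplied by an auxiliary independent normal variable, and to use Stein-type integration by parts so that the per-step comparison errors stay at third order. First, I would introduce a standard normal $Z$ independent of the martingale and interpolate between $S_n/s_n$ and an exact standard normal by setting $W_k := s_n^{-1}\bclr{S_k + \sqrt{\rho_{k+1}^2 + a^2}\,Z}$ for $0\le k\le n$. Then $W_n = s_n^{-1}(S_n + aZ)$ differs from $S_n/s_n$ by at most $a/s_n$ in Wasserstein distance, while $W_0 = s_n^{-1}\sqrt{V_n^2 + a^2}\,Z$; here the hypothesis $V_n^2 = s_n^2$ is essential, since it makes $W_0$ a centred normal of variance $1 + a^2/s_n^2$, hence within $O(a/s_n)$ of $\N(0,1)$. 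These two boundary adjustments are what I expect to produce the additive $2a/s_n$. For a $1$-Lipschitz $h$, writing $\IE h(W_n) - \IE h(W_0) = \sum_{k=1}^n \bclr{\IE h(W_k) - \IE h(W_{k-1})}$ then reduces everything to the individual increments.

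Second, I would analyse $\IE h(W_k) - \IE h(W_{k-1})$ for each $k$. Since $\rho_k^2 = \sigma_k^2 + \rho_{k+1}^2$, the two states share a common Gaussian part of variance $\rho_{k+1}^2 + a^2$ and differ only by replacing a conditionally $\N(0,\sigma_k^2)$ increment (the extra Gaussian piece carried by $W_{k-1}$) with the true difference $X_k$. The role of the auxiliary normal is precisely to turn $t\mapsto \IE h(\text{base}+t)$ into a Gaussian-smoothed function $\Phi_k$, whose derivatives obey Stein-type bounds $\norm{\Phi_k'}\le 1$ and $\norm{\Phi_k'''}\lesssim s_n^2/(\rho_k^2+a^2)$, the regulariser $a^2$ guaranteeing nondegeneracy even for the last steps where $\rho_{k+1}^2\to 0$. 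Matching the first two conditional moments — $\IE(X_k\mid\cF_{k-1})=0$ and $\IE(X_k^2\mid\cF_{k-1})=\sigma_k^2$ — against those of the Gaussian increment should cancel the zeroth, first and second order terms of a Taylor/Stein expansion, leaving a third order remainder of size a constant times $s_n^{-1}\IE\bbbclr{\abs{X_k}^3/(\rho_k^2+a^2)}$; here the Gaussian contribution to the remainder is comparable to the true one because $\IE\abs{\N(0,\sigma_k^2)}^3\le c\,\IE(\abs{X_k}^3\mid\cF_{k-1})$ by conditional Lyapunov. Summing over $k$ then yields the main term.

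The hard part, I expect, is that $\rho_{k+1}^2$ — and hence the smoothing scale — depends on the future conditional variances $\sigma_{k+1}^2,\dots,\sigma_n^2$ and is therefore not $\cF_{k-1}$-measurable. Consequently the Taylor coefficients $\Phi_k'(0)$ and $\Phi_k''(0)$ are not $\cF_{k-1}$-measurable either, so the vanishing of the first and second order terms cannot be obtained by naively conditioning on $\cF_{k-1}$; this is the recurring obstruction in martingale central limit theorems. The resolution I would pursue is to couple the smoothing Gaussian so that, given $\cF_{k-1}$, it is genuinely independent of both $X_k$ and of the matched Gaussian increment (the auxiliary variable being independent of the whole sequence), so that the matching of the two conditional moments still forces the low-order terms to disappear, while the third order remainder is estimated only after conditioning on the realised value of $\rho_k^2$, thereby retaining the sharp factor $1/(\rho_k^2+a^2)$ rather than a worst-case $1/a^2$. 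Carefully tracking the absolute constants through this expansion — the third-derivative smoothing bound, the factor from $\IE\abs{\N(0,1)}^3$, and the comparison of Gaussian and martingale third moments — is what I expect to deliver the stated constant $3$.
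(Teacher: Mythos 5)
Your overall architecture --- telescoping between $S_k$ plus a Gaussian tail and $S_{k-1}$ plus a slightly larger Gaussian tail, matching the first two conditional moments, and paying a third-order price --- is the same as the paper's, and your boundary accounting (the two $O(a/s_n)$ adjustments at $k=0$ and $k=n$) is fine. But two steps do not survive scrutiny. First, the derivative bound $\norm{\Phi_k'''}\lesssim s_n^2/(\rho_k^2+a^2)$ is not available from the mechanism you describe: the Gaussian part common to $W_k$ and $W_{k-1}$, which is the only smoothing you may build into $\Phi_k$ before expanding in the increment (it must not involve the variable being replaced), has conditional variance $\rho_{k+1}^2+a^2$, not $\rho_k^2+a^2$. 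A straight three-term Taylor expansion therefore produces $\IE\bclc{\abs{X_k}^3/(\rho_{k+1}^2+a^2)}$, which for $k=n$ degenerates to $\IE\abs{X_n}^3/a^2$ and rules out the choice $a=0$ needed for Corollary~\ref{cor1}. The paper avoids expanding $h$ altogether: it replaces $\IE\clc{h(S_{k-1}+T_k+aZ')\given\cF_{k-1}}$ \emph{exactly} by $\IE h(\rho_k'Y+S_{k-1})$ with $\rho_k'=\sqrt{\rho_k^2+a^2}$, rewrites the increment through the solution $f_{S_{k-1},\rho_k'}$ of the Stein equation at the full scale $\rho_k'$, kills the $T_{k+1}'$ contribution by the Gaussian Stein identity, and then Taylor-expands only in $X_k$, which costs just $\norm{f''_{s,t}}\leq 2\norm{h'}/t^2$. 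That is precisely where the denominator $\rho_k^2+a^2$ and the constant $3=1.5\times 2$ come from; without this step you prove a genuinely weaker statement.

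Second, your ``hard part'' paragraph misdiagnoses the role of the hypothesis. Under $V_n^2=s_n^2$ almost surely one has $\rho_{k+1}^2=s_n^2-V_k^2$, and $V_k^2=\sum_{i\leq k}\IE(X_i^2\given\cF_{i-1})$ is $\cF_{k-1}$-measurable; hence $\rho_k^2$ and $\rho_{k+1}^2$, and with them your smoothing scale and the Taylor coefficients $\Phi_k'(0)$ and $\Phi_k''(0)$, \emph{are} $\cF_{k-1}$-measurable. This --- not merely pinning down the variance of $W_0$ --- is the essential use of the hypothesis, and it removes the obstruction you worry about. Conversely, the resolution you sketch cannot work in its absence: no coupling of the auxiliary Gaussian makes the scale $\rho_{k+1}^2$, which is a function of the future conditional variances and hence of $X_k$ itself, conditionally independent of $X_k$, and then neither the first- nor the second-order terms cancel. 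Replace that paragraph by the measurability observation and import the Stein-equation step, and the argument closes.
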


\begin{proof} Let $Z',Z_1,\dots,Z_n,$ be a sequence of independent standard normal random variables, also independent of $\cF_n$, and let
\ben{\label{4}
   Z := \sum_{i=1}^n \sigma_i Z_i,\qquad
   T_k:=\sum_{i=k}^n\sigma_i Z_i, \quad 1\leq k\leq n+1.
}
Note that $Z$ is normally distributed with variance $s_n^2$, both conditionally on $\cF_n$ and unconditionally. Moreover, note that both
\ben{\label{5}
    \text{$\rho_{k}^2$ and $\rho_{k+1}^2$ are $\cF_{k-1}$-measurable;}
}
the latter is true since, by assumption, $\rho^2_{k+1} = V^2_n-V^2_{k} = s^2_n-V^2_{k}$ almost surely and $V^2_{k}$ is $\cF_{k-1}$-measurable, and since $\rho_k^2=\sigma_k^2+\rho_{k+1}^2$ the former also follows. From \eq{5}, we conclude that
\ben{\label{6}
    \text{$\law(T_k|\cF_{k-1}) \sim \N(0,\rho_k^2)$\quad and \quad $\law(T_{k+1}|\cF_{k-1}) \sim \N(0,\rho_{k+1}^2)$}
}
almost surely.
Now, fix a $1$-Lipschitz-continuous function $h$, and note that $h$ is differentiable almost everywhere; denote by $h'$ this derivative and extend it to the whole real line, for example, by using the left derivative. We clearly have $\norm{h'}\leq 1$, where $\norm{\cdot}$ denotes the supremum norm. Using the triangle inequality, it is easy to see that
\ben{\label{7}
  \abs{\IE\clc{h(S_n) - h(Z)}}
  \leq\abs{\IE\clc{h(S_n+aZ') - h(Z+aZ')}} + 2a.
}
Then, using Lindeberg's telescoping sum representation and conditioning inside the expectation, write
\ben{\label{8}
    \IE\clc{h(S_n+aZ') - h(Z+aZ')}
     = \IE\, \sum_{k=1}^n 
        \IE\clc{R_k\given \cF_{k-1}},
}
where 
\ben{\label{9} 
  R_k=h(S_k + T_{k+1}+aZ') - h(S_{k-1} + T_k+aZ').
}
Let $g$ be the unique bounded solution to
\be{
     g'(x)-xg(x) = \~h(x) - \IE \~h(Y),
    \qquad x\in\IR,
}
where $Y\sim \N(0,1)$ and where $\~h$ is any measurable real-valued function. In the case where~$\~h$ is Lipschitz-continuous, \cite{Stein1986} and \cite{Raic2004} proved that $\norm{g''}\leq 2\norm{\~h'}$ and $\norm{g'}\leq \sqrt{2/\pi}\norm{\~h'}\leq \norm{\~h'}$, respectively.
Let $s\in\IR$, let $t>0$, and let $\~h(x) =  h\clr{tx+s}/t$, which is Lipschitz-continuous. Defining $f_{s,t}(w) := g\clr{(w-s)/t}$, where $w\in\IR$, it is not difficult to see that $f_{s,t}$ satisfies
\ben{\label{10}
    t^2 f_{s,t}'(w)-(w-s)f_{s,t}(w) = h(w) - \IE h(tY+s),
    \qquad w\in\IR.
}
From the bounds on $g$, we easily obtain the bounds
\ben{\label{11}
      \bnorm{f'_{s,t}}\leq \frac{\norm{h'}}{t},
        \qquad\bnorm{f''_{s,t}}\leq \frac{2\norm{h'}}{t^2}.
}
It is also straightforward to show that $f_{s,t}(w)$, as well as $f'_{s,t}(w)$ and $f''_{s,t}(w)$, understood as functions from $\IR\times\IR_{>0}\times\IR\to\IR$, are measurable, so that in what follows, we are allowed to write expressions like $f_{U,V}(W)$ for arbitrary random variables~$U$, $V$ and $W$, where $V>0$. 

Now, let $T_{k}':= T_{k} + aZ'$, and note that by \eq{6},
\ben{\label{11b}
  \law\bclr{T_{k}'\given\cF_{k-1} } = \N(0,\rho^2_k+a^2).
}
We now consider everything conditionally on $\cF_{k-1}$; let $s=S_{k-1}$ and $t = \rho'_k := \sqrt{\rho_k^2+a^2}$, and note that both $s$ and $t$ are $\cF_{k-1}$-measurable. Hence, using the definition of $T'_k$, then~\eq{11b}, and then~\eq{10} with $w$ being replaced by $S_k+T'_{k+1}$, we can write
\besn{\label{12}
   & \IE\clc{R_k\given\cF_{k-1}} \\
   &\quad =  \IE\clc{h(S_k + T_{k+1}+aZ') - h(S_{k-1} + T_k+aZ')\given\cF_{k-1}}\\   
   &\quad =  \IE\clc{h(S_k + T_{k+1}') - h(S_{k-1} +T'_k)\given\cF_{k-1}}\\   
   &\quad =  \IE\clc{h(S_k + T_{k+1}') - h( \rho'_kY+S_{k-1})\given\cF_{k-1}}\\   
   &\quad =  
    \IE\bclc{\rho_k'^{2} f_{S_{k-1},\rho_k'}'(S_k+T_{k+1}') -
(S_k+T_{k+1}'-S_{k-1})f_{S_{k-1},\rho_k'}(S_k+T_{k+1}')\given\cF_{k-1} }.
}
Moreover, recalling the definition of $\rho'_k$ and recalling that $\rho_k^2 = \sigma^2_k+ \rho_{k+1}^2$,
\bes{
    & \IE\bclc{\rho_k'^{2} f_{S_{k-1},\rho_k'}'(S_k+T_{k+1}') -
(S_k+T_{k+1}'-S_{k-1})f_{S_{k-1},\rho_k'}(S_k+T_{k+1}')\given\cF_{k-1} }\\
    & \quad= \IE\bclc{(\rho_k^2+a^2) f_{S_{k-1},\rho_k'}'(S_k+T_{k+1}') -
(X_k+T_{k+1}')f_{S_{k-1},\rho_k'}(S_k+T_{k+1}')\given\cF_{k-1} }\\
    & \quad = \IE\bclc{\sigma_k^2 f_{S_{k-1},\rho_k'}'(S_k+T_{k+1}') -
X_k f_{S_{k-1},\rho_k'}(S_k+T_{k+1}')\given\cF_{k-1} }\\
    & \quad\qquad +\IE\bclc{(\rho_{k+1}^2+a^2) f_{S_{k-1},\rho_k'}'(S_k+T_{k+1}') -
T_{k+1}'f_{S_{k-1},\rho_k'}(S_k+T_{k+1}')\given\cF_{k-1} }\\
    &\quad =  
    \IE\bclc{\sigma_k^2 f_{S_{k-1},\rho_k'}'(S_k+T_{k+1}') -
X_k f_{S_{k-1},\rho_k'}(S_k+T_{k+1}')\given\cF_{k-1} },
}
where in order to obtain the last equality, we used the fact that $\law\bclr{T_{k+1}'\given\cF_{k-1} } = \N(0,\rho^2_{k+1}+a^2)$ and that $\IE\clc{b^2 g'(Y) - Y g(Y)} = 0$ for every function $g$ for which the expectation exist whenever $Y\sim\N(0,b^2)$, with $b = \rho_{k+1}^2+a^2$.

Using Taylor expansion, the fact that $\IE\clc{X_k\given\cF_{k-1}}=0$ and  $\IE\clc{X_k^2\given\cF_{k-1}}=\sigma_k^2$, and also that, conditionally on $\cF_{k-1}$, $X_k$ and $T_{k+1}'$ are independent of each other,  we obtain
\besn{\label{13}
  \IE\clc{R_k\given\cF_{k-1}}
    &= \bbbabs{\IE\bbbclc{\sigma^2_k X_k\int_0^1
 f_{S_{k-1},\rho_k'}''(S_{k-1}+sX_k+T_{k+1}')ds \\
    &\kern3em+ X_k^3\int_0^1(1-s)
	 f_{S_{k-1},\rho_k'}''(S_{k-1}+sX_k+T_{k+1}')ds\given \cF_{k-1}}}.
}
Using \eq{11},
\be{
  \abs{\IE\clc{R_k\given\cF_{k-1}}}
    \leq1.5\bnorm{f_{S_{k-1},\rho_k'}''}\IE\bclc{\abs{X_k}^3\given \cF_{k-1}}
    \leq
    \frac{3\norm{h'}}{\rho_k^2+a^2}\IE\bclc{\abs{X_k}^3\given \cF_{k-1}}.
}
Thus,
\ben{
    \abs{\IE h(S_n)-\IE h(Z)} \leq   
    3
    \sum_{k=1}^n\IE\frac{\abs{X_k}^3}{\rho_k^2+a^2} +  2a.
}
Scaling by $1/s_n$, the final bound follows.
\end{proof}

The following corollary is an immediate consequence of Theorem~\ref{thm1}; it not only gives a better rate of convergence under weaker conditions than Theorem~4 of \cite{Van-Dung2014}, but also explicit constants.

\begin{corollary}\label{cor1} 
Assume that $V_n^2 = s_n^2$ almost surely, and assume there exist constants~$\alpha$ and $\gamma$ such that
$0<\alpha\leq\sigma^2_k$ and $\IE\abs{X_k}^3\leq\gamma$ for  $1\leq k \leq n$. Then
\ben{\label{15}
    \dw\bclr{\law(S_n/s_n),\N(0,1)} \leq
\frac{3\gamma(1+\log n)}{\alpha^{3/2}\sqrt{n}} .
}
\end{corollary}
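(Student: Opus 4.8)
The plan is to apply Theorem~\ref{thm1} with the specific choice $a=0$. This immediately removes the boundary term $2a/s_n$ and reduces the task to bounding the single sum $\frac{3}{s_n}\sum_{k=1}^n\IE\frac{\abs{X_k}^3}{\rho_k^2}$ under the two standing hypotheses $\sigma_k^2\geq\alpha$ and $\IE\abs{X_k}^3\leq\gamma$.

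First I would obtain a \emph{deterministic} lower bound on $\rho_k^2$. Since $\rho_k^2=\sum_{i=k}^n\sigma_i^2$ is a sum of $n-k+1$ terms, each at least $\alpha$ almost surely, we have $\rho_k^2\geq(n-k+1)\alpha$ almost surely. Because this bound is non-random, it comes straight out of the expectation, giving $\IE\frac{\abs{X_k}^3}{\rho_k^2}\leq\frac{\IE\abs{X_k}^3}{(n-k+1)\alpha}\leq\frac{\gamma}{(n-k+1)\alpha}$.

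Summing over $k$ and reindexing by $j=n-k+1$ turns this into a harmonic sum: $\sum_{k=1}^n\frac{1}{(n-k+1)\alpha}=\frac1\alpha\sum_{j=1}^n\frac1j\leq\frac{1+\log n}{\alpha}$, the last step being the elementary estimate $\sum_{j=1}^n 1/j\leq 1+\int_1^n x^{-1}\,dx$. For the prefactor, I would note that $\bar\sigma_i^2=\IE\sigma_i^2\geq\alpha$, so $s_n^2=\sum_{i=1}^n\bar\sigma_i^2\geq n\alpha$ and hence $s_n\geq\sqrt{n\alpha}$. Multiplying the three estimates together yields $\dw\bclr{\law(S_n/s_n),\N(0,1)}\leq\frac{3}{\sqrt{n\alpha}}\cdot\frac{\gamma(1+\log n)}{\alpha}=\frac{3\gamma(1+\log n)}{\alpha^{3/2}\sqrt n}$.

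There is no substantial obstacle: the corollary is a direct specialisation of Theorem~\ref{thm1}, and the only decision of any consequence is taking $a=0$. One might expect the free parameter $a$ to require tuning, but the uniform lower bound $\sigma_k^2\geq\alpha$ keeps every denominator $\rho_k^2$ away from zero---the smallest, at $k=n$, being $\sigma_n^2\geq\alpha$---so no regularisation is needed and the logarithmic factor emerges entirely from the harmonic sum.
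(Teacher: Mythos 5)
Your proof is correct and is precisely the derivation the paper intends: the corollary is stated as an immediate consequence of Theorem~\ref{thm1}, obtained by taking $a=0$, using $\rho_k^2\geq(n-k+1)\alpha$ and $s_n^2\geq n\alpha$, and bounding the resulting harmonic sum by $1+\log n$. No gaps; your observation that $a=0$ is admissible because $\rho_n^2\geq\alpha>0$ is exactly the right justification.
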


As mentioned before, a rate of order $n^{1/4}$ is
sharp for the Kolmogorov distance under the conditions of Corollary~\ref{cor1}. It is not difficult to see
that for any random variable $X$, 
\be{
    \dw\bclr{\law(X),\N(0,1)}\leq \eps
    \quad\text{implies} \quad
    \dk\bclr{\law(X),\N(0,1)}\leq \eps^{1/2},
}
where $\dk$ denotes the Kolmogorov distance,
so that \eq{17} comes quite close to the optimal rate, but has the advantage
of giving explicit constants.

If the conditional variances can not be bounded away from zero, we need stronger conditions on the third moments. Indeed, \cite[Theorem~2]{Bolthausen1982a} assumes in this case uniformly bounded $X_i$. Using Theorem~\ref{thm1}, it will be enough
to assume some appropriate bounds on the conditional third moments.

\begin{corollary}\label{cor2} Assume $V_n^2=s_n^2$ almost surely, and assume there exist constants~$\beta$ and~$\delta$ such that 
\ben{                                                 \label{16}
    \IE\bclc{\abs{X_k}^3\given\cF_{k-1}}\leq \beta\wedge\delta\sigma^2_k,
    \qquad 1\leq k\leq n.
}
Then
\ben{                                                 \label{17}
    \dw\bclr{\law(S_n/s_n),\N(0,1)}
    \leq \frac{3\delta n(s^2_n/n+\beta^{2/3})(1+\log n)}{s_n^3} 
    +\frac{2}{\sqrt{n}}.
}
\end{corollary}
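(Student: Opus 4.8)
The plan is to apply Theorem~\ref{thm1} with a judicious choice of the free parameter~$a$ and then to bound the resulting sum using the second half of the hypothesis~\eqref{16}, namely $\IE\bclc{\abs{X_k}^3\given\cF_{k-1}}\leq\delta\sigma_k^2$. First I would fix $a$ so that the additive term $2a/s_n$ in~\eqref{3} becomes exactly $2/\sqrt n$; this forces $a=s_n/\sqrt n$, so that $a^2=s_n^2/n$. It then remains to control the main term $\frac{3}{s_n}\sum_{k=1}^n\IE\frac{\abs{X_k}^3}{\rho_k^2+a^2}$.

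For this sum, since $\rho_k^2$ is $\cF_{k-1}$-measurable by~\eqref{5}, I would condition on $\cF_{k-1}$ and insert the third-moment bound to obtain
\be{
  \sum_{k=1}^n\IE\frac{\abs{X_k}^3}{\rho_k^2+a^2}
  =\sum_{k=1}^n\IE\frac{\IE\bclc{\abs{X_k}^3\given\cF_{k-1}}}{\rho_k^2+a^2}
  \leq\delta\sum_{k=1}^n\IE\frac{\sigma_k^2}{\rho_k^2+a^2}.
}
The key step is then a pathwise telescoping of the denominators. Writing $R_k:=\rho_k^2+a^2$, the assumption $V_n^2=s_n^2$ gives $R_1=s_n^2+a^2$ and $R_{n+1}=a^2$ almost surely, while $R_k-R_{k+1}=\sigma_k^2$. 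Using $1-x\leq-\log x$ with $x=R_{k+1}/R_k\in(0,1]$, I obtain, almost surely,
\be{
  \sum_{k=1}^n\frac{\sigma_k^2}{\rho_k^2+a^2}
  =\sum_{k=1}^n\frac{R_k-R_{k+1}}{R_k}
  \leq\sum_{k=1}^n\log\frac{R_k}{R_{k+1}}
  =\log\frac{s_n^2+a^2}{a^2}
  =\log(n+1)\leq1+\log n.
}
Taking expectations and combining with Theorem~\ref{thm1} yields the clean bound $\dw\bclr{\law(S_n/s_n),\N(0,1)}\leq\frac{3\delta(1+\log n)}{s_n}+\frac{2}{\sqrt n}$.

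To recover the stated form~\eqref{17}, I would relax $s_n^2\leq s_n^2+n\beta^{2/3}$, so that $\frac1{s_n}\leq\frac{n(s_n^2/n+\beta^{2/3})}{s_n^3}$; here $\beta^{2/3}$ is the natural uniform bound on the conditional variances, since conditional Jensen and the first half of~\eqref{16} give $\sigma_k^2=\IE\bclc{X_k^2\given\cF_{k-1}}\leq\bclr{\IE\bclc{\abs{X_k}^3\given\cF_{k-1}}}^{2/3}\leq\beta^{2/3}$. I expect the main obstacle to be the telescoping inequality itself, namely recognising that the denominators collapse and that $V_n^2=s_n^2$ is exactly what renders the boundary term $R_1$ deterministic; everything after that is bookkeeping. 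It is worth noting that only the $\delta\sigma_k^2$ half of~\eqref{16} is used in the clean bound, and that the $\beta$ half enters merely through the relaxation that casts the result in the form~\eqref{17}.
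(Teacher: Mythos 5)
Your proposal is correct, and it takes a genuinely different route from the paper. The paper partitions the indices into blocks via the stopping times $\tau_k:=\sup\{m\geq 0: V_m^2\leq s_n^2k/n\}$, bounds $\rho_j^2$ from below by $s_n^2(n-k)/n$ on the $k$-th block, and controls the conditional variance accumulated in each block by $\IE(V_{\tau_k}^2-V_{\tau_{k-1}}^2)\leq s_n^2/n+\beta^{2/3}$, the overshoot $\beta^{2/3}$ coming from the bound $\sigma_j^2\leq\beta^{2/3}$ that you also note; summing the resulting harmonic series gives exactly \eqref{17}. Your pathwise telescoping via $1-x\leq-\log x$ replaces this entire block construction: since $R_{k+1}\leq R_k$ and $R_{n+1}=a^2>0$, the chain $\sum_k(R_k-R_{k+1})/R_k\leq\log(R_1/R_{n+1})=\log(n+1)\leq 1+\log n$ holds almost surely, and the only structural input is that $V_n^2=s_n^2$ makes $R_1$ deterministic and $\rho_k^2$ an $\cF_{k-1}$-measurable quantity (the paper's \eqref{5}), which you correctly invoke to justify pulling out the conditional third moment. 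The payoff is a strictly sharper intermediate bound, $3\delta(1+\log n)/s_n+2/\sqrt n$, in which the constant $\beta$ plays no role at all; the stated form \eqref{17} then follows from the trivial relaxation $s_n^2\leq s_n^2+n\beta^{2/3}$. Both arguments are sound; yours is shorter, avoids stopping times entirely, and makes transparent that only the $\delta\sigma_k^2$ half of \eqref{16} is actually needed, whereas the paper's version is the one whose block structure generalises to the third corollary, where the martingale must first be extended to force $\~V_{2n}^2=s_n^2$.
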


\begin{proof}
Define the sequence of stopping times
\be{
    \tau_0:=0,
    \qquad\tau_k:=\sup\bclc{m\geq 0:V^2_m\leq s^2_nk/n}
        \quad\text{for $1\leq k < n$},
    \qquad\tau_n:=n.
}
Note that $\{\tau_k = m\} = \bclc{V^2_{m}\leq s^2_nk/n}\cap \bclc{V^2_{m+1}> s^2_nk/n}$, and since both $V^2_{m}$ and $V^2_{m+1}$ are $\cF_m$-measurable, it follows that $\{\tau_k = m\}\in\cF_{m}$, so that $\tau_k$ is indeed a stopping time. 
Now, if $j\leq\tau_k$, we have
\be{
    \rho^2_{j} = s_n^2 - V_{j-1}^2\geq s_n^2(n-k)/n.
}
Thus, for $1\leq k \leq n$,
\besn{\label{18}
    \IE\sum_{j=\tau_{k-1}+1}^{\tau_{k}}
        \frac{\abs{X_j}^3}{\rho^2_j+s_n^2/n}
    & = \IE\sum_{j=1}^n\IE\bbbclc{ \frac{\abs{X_j}^3}{\rho^2_j+s^2_n/n}\I[\tau_{k-1}< j\leq \tau_k]\given \cF_{j-1}}\\
    & \leq\frac{n}{s^2_n(n-k+1)} \IE\sum_{j=1}^n\I[\tau_{k-1}< j\leq \tau_k]\IE\bclc{\abs{X_j}^3\given \cF_{j-1}}\\
    & \leq
    \frac{\delta n}{s_n^2(n-k+1)}\IE(V_{\tau_{k}}^2-V_{\tau_{k-1}}^2)
    \leq \frac{\delta n(s^2_n/n+\beta^{2/3})}{s_n^2(n-k+1)},
}
where we have used \eq{16} in the second-last and last inequality. With
\be{
    \sum_{k=1}^n\IE
        \frac{\abs{X_k}^3}{\rho^2_k+s^2_n/n}
    \leq
    \sum_{k=1}^{n}\IE\sum_{j=\tau_{k-1}+1}^{\tau_{k}}
        \frac{\abs{X_j}^3}{\rho^2_j+s^2_n/n} 
    \leq 
     \sum_{k=1}^{n} \frac{\delta n(s^2_n/n+\beta^{2/3})}{s_n^2(n-k+1)},
}
the final bound now easily follows from Theorem~\ref{thm1} with $a=s_n/\sqrt{n}$.
\end{proof}

The convergence behaviour of $S_n/s_n$ to the normal distribution is intimately
connected to the behaviour of $V_n^2/s_n^2$ and thus rates of convergence of
$S_n$ crucially depend on the rate of convergence of
\ben{\label{19}
    V_n^2 /s_n^2 \to 1 \qquad \text{as $n\toinf$.}
}

\begin{corollary} 
Assume there exist constants~$\beta$ and~$\delta$ such that 
\eq{16} holds. Then
\bes{
    &\dw\bclr{\law(S_n/s_n),\N(0,1)}\\
    &\qquad\leq 1.5\bclr{\IE\babs{V_n^2/s_n^2-1}}^{1/2} + 
     \frac{3 n(1.6\beta^{1/3}\vee \delta)(s^2_n/n+1.4\beta^{2/3})(1+\log n)}{s_n^3} 
    +\frac{2}{\sqrt{n}}.
} 

\end{corollary}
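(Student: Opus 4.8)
The plan is to reduce to the situation $V_n^2 = s_n^2$ almost surely, which is exactly what Corollary~\ref{cor2} covers, and to pay for the reduction with the first term on the right-hand side. Concretely, I would introduce, on an enlarged probability space, an auxiliary martingale difference sequence obtained from $X_1,\dots,X_n$ by inserting independent conditionally Gaussian increments whose variances are chosen so that the associated conditional variance process reaches the deterministic value $s_n^2$ exactly. Writing $\tilde S$ for the resulting sum, so that its predictable total conditional variance equals $s_n^2$, the triangle inequality gives
\be{
  \dw\bclr{\law(S_n/s_n),\N(0,1)}
  \leq \dw\bclr{\law(S_n/s_n),\law(\tilde S/s_n)}
  + \dw\bclr{\law(\tilde S/s_n),\N(0,1)},
}
and the second summand is treated directly by the method of Corollary~\ref{cor2}.

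For that estimate to apply, the new sequence must satisfy condition~\eq{16}. The original increments contribute the constant $\delta$, whereas a Gaussian correction increment of conditional variance $v$ has conditional third moment $\IE\babs{\N(0,v)}^3 = 2\sqrt{2/\pi}\,v^{3/2}$; since $\sigma_k^2\leq\beta^{2/3}$ by the conditional Lyapunov inequality, I would keep each correction of variance at most $\beta^{2/3}$, so that its third moment is bounded by $2\sqrt{2/\pi}\,\beta^{1/3}v \leq 1.6\,\beta^{1/3}v$. Taking the maximum over both kinds of increment yields the effective constant $1.6\beta^{1/3}\vee\delta$, and the inflated variance at the crossing step accounts for the replacement of $\beta^{2/3}$ by $1.4\beta^{2/3}$; feeding these into the stopping-time argument of Corollary~\ref{cor2}, with the variance range $[0,s_n^2]$ still partitioned into $n$ slabs, reproduces the middle term together with the $2/\sqrt n$ term.

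The defect term is bounded by coupling: $\dw\bclr{\law(S_n/s_n),\law(\tilde S/s_n)} \leq \IE\babs{S_n-\tilde S}/s_n$. By construction the difference $S_n-\tilde S$ is, conditionally on $\cF_n$, a mean-zero increment whose conditional variance is controlled by $\babs{s_n^2-V_n^2}$; using $\IE\babs{\N(0,v)} = \sqrt{2/\pi}\,v^{1/2}$ together with the concavity of the square root (Jensen), one gets $\IE\babs{S_n-\tilde S} \lesssim \bclr{\IE\babs{s_n^2-V_n^2}}^{1/2}$, and division by $s_n$ turns this into a constant times $\bclr{\IE\babs{V_n^2/s_n^2-1}}^{1/2}$. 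It is precisely this Jensen step, rather than any linear bound, that produces the square root in the statement, and collecting the contributions of filling a deficit and of correcting an excess gives the constant $1.5$.

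The main obstacle is the construction of $\tilde S$ itself. The process $V_k^2$ is increasing and predictable but overshoots or undershoots $s_n^2$ depending on the sample point, so the embedding must simultaneously (i) append small Gaussian increments to make up a deficit when $V_n^2<s_n^2$, and (ii) truncate, via a stopping time at the first crossing of $s_n^2$ and a final partial Gaussian increment, to absorb an excess when $V_n^2>s_n^2$, all while keeping every correction of variance at most $\beta^{2/3}$ so that~\eq{16} survives, and keeping the coupling cost controlled by $\babs{s_n^2-V_n^2}$ rather than by the full removed tail. Making these requirements compatible, and in particular exhibiting a coupling whose cost does not blow up in the overshoot regime, is the delicate point; the rest is bookkeeping built on Corollary~\ref{cor2} and the explicit Gaussian absolute-moment constants.
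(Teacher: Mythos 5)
Your proposal follows essentially the same route as the paper: extend the martingale with Gaussian increments of conditional variance at most $\beta^{2/3}$ so that the total conditional variance equals $s_n^2$ exactly, apply Corollary~\ref{cor2} to the extended sequence with $\beta$ replaced by $1.6\beta$ and $\delta$ by $1.6\beta^{1/3}\vee\delta$, and bound the coupling cost in $L^2$ to obtain the square-root term. The construction you flag as the delicate point is resolved in the paper by a single stopping time $\tau=\sup\{m\leq n: V_m^2\leq s_n^2\}$: one keeps $X_1,\dots,X_\tau$, discards any overshooting tail, and refills the deficit $s_n^2-V_\tau^2$ with $R=\lfloor (s_n^2-V_\tau^2)\beta^{-2/3}\rfloor$ i.i.d.\ $\N(0,\beta^{2/3})$ increments plus one final partial Gaussian increment, after which the $L^2$ estimate $\IE\abs{S_n-\~S_{2n}}\leq\sqrt{2}\,(\IE\abs{V_n^2-s_n^2})^{1/2}$ gives the first term.
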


\begin{proof} Define the stopping time $\tau=\sup\bclc{m\leq n: V_m^2\leq s_n^2}$, and let the new martingale difference sequence~$\~X_1,\dots,\~X_{2n}$ be defined as follows.
For $1\leq k\leq\tau$, let $\~X_k := X_k$. Define
$R:=\floor{(s_n^2-V^2_\tau)\beta^{-2/3}}$ and note that $\tau+R$ is also a stopping time. Let
$\~X_{\tau+1},\dots,\~X_{\tau+R}$ be i.i.d.\/ with distribution~$\N(0,\beta^{2/3})$, let $\~X_{\tau+R+1}$ have distribution $\N(0,s_n^2-V_\tau^2-R\beta^{2/3})$, and set $\~X_k := 0$ for all $\tau+R+2\leq k\leq 2n$. Note that, because $s^2_n\leq n\beta^{2/3}$, we will never need more than~$2n$ indices. By construction, we have $\~V^2_{2n} = s^2_n$ almost surely, and
\be{
    \IE\bclc{\abs{\~X_k}^3\given\cF_{k-1}} 
    \leq 
   \begin{cases}
     \beta\wedge\delta\sigma^2_k
      & \text{if $1\leq k\leq\tau$,} \\
     1.6\beta\wedge 1.6\beta^{1/3}\sigma_k^2
      & \text{if $\tau < k \leq \tau+R$,} \\
     1.6(s_n^2-R\beta^{2/3})^{3/2}\wedge 1.6(s_n^2-R\beta^{2/3})^{1/2}\sigma^2_k
      & \text{if $k=\tau+R+1$,} \\ 
     0 \wedge 0\sigma_k^2
      & \text{if $\tau+R+2\leq k\leq 2n$.}
   \end{cases}
}
Since $s_n^2-R\beta^{2/3}\leq \beta^{3/2}$, it is follows that
\be{
  \IE\bclc{\abs{\~X_k}^3\given\cF_{k-1}}
  \leq 1.6\beta \wedge (1.6\beta^{1/3}\vee \delta)\sigma^2_k,
  \qquad 1\leq k\leq 2n,
}
so that we can apply Corollary~\ref{cor2} to $\~S_{2n}$ with $\beta$ being replaced by $1.6\beta$ and $\delta$ being replaced by $1.6\beta^{1/3}\vee \delta$.
Noting moreover that 
\be{
    \IE\abs{S_n-\~S_{2n}} 
    \leq \bbbcls{\IE\bbbclr{\sum_{i=\tau}^n X_i - 
            \sum_{i=\tau+1}^{\tau+R+1} \~X_i}^2}^{1/2}
    \leq \sqrt{2}\,\bclr{\IE\babs{V^2_n-s^2_n}}^{1/2},
}
the claim follows.
\end{proof}

It is possible to replace the random quantities $\rho_k^2$ in Theorem~\ref{thm1} by the averaged quantities $\-\rho_k^2 = \IE\rho_k^2$.

\begin{theorem}\label{thm2} For any martingale with finite
third moments, 
\ben{                                                    \label{20}
    \dw\bclr{\law(S_n/s_n),\N(0,1)} \leq   
    \frac{1}{s_n}
    \sum_{k=1}^n\frac{3\IE\abs{X_k}^3+\IE\abs{\sigma^2_k-\-\sigma^2_k}}{\-\rho^2_k+a^2} + \frac{2a}{s_n}.
} 
\end{theorem}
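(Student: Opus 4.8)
The plan is to re-run the proof of Theorem~\ref{thm1} almost verbatim, replacing the random variances $\sigma_i^2$ by the deterministic $\-\sigma_i^2$ in the Gaussian comparison sequence; this single substitution is exactly what removes the need for the hypothesis $V_n^2=s_n^2$. Keeping $Z',Z_1,\dots,Z_n$ as in \eq{4}, I would instead set $Z:=\sum_{i=1}^n\-\sigma_iZ_i$ and $T_k:=\sum_{i=k}^n\-\sigma_iZ_i$. Because the coefficients $\-\sigma_i$ are now constants, $Z\sim\N(0,s_n^2)$ and $T_k$ is \emph{independent} of $\cF_n$ with $\law(T_k)=\N(0,\-\rho_k^2)$; in particular $\law\bclr{T_k\given\cF_{k-1}}=\N(0,\-\rho_k^2)$ and $\law\bclr{T_{k+1}'\given\cF_{k-1}}=\N(0,\-\rho_{k+1}^2+a^2)$ now hold \emph{without} invoking \eq{5}. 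The smoothing inequality \eq{7}, the telescoping representation \eq{8}--\eq{9}, and the Stein solution $f_{s,t}$ together with its bounds \eq{11} all carry over unchanged, now taking $s=S_{k-1}$ and $t=\-\rho_k':=\sqrt{\-\rho_k^2+a^2}$, both of which are $\cF_{k-1}$-measurable.

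With these substitutions, the computation \eq{12} reproduces $\IE\bclc{R_k\given\cF_{k-1}}=\IE\bclc{\-\rho_k'^2f'_{S_{k-1},\-\rho_k'}-(X_k+T_{k+1}')f_{S_{k-1},\-\rho_k'}\given\cF_{k-1}}$, with all functions evaluated at $S_k+T_{k+1}'$. I would then split $\-\rho_k'^2=\-\sigma_k^2+(\-\rho_{k+1}^2+a^2)$. The ``Gaussian'' part $\IE\bclc{(\-\rho_{k+1}^2+a^2)f'_{S_{k-1},\-\rho_k'}-T_{k+1}'f_{S_{k-1},\-\rho_k'}\given\cF_{k-1}}$ still vanishes exactly: conditioning on $\cF_k$ (legitimate here because $T_{k+1}'$ is independent of $\cF_n$), the shift $S_k$ is fixed, $T_{k+1}'\sim\N(0,\-\rho_{k+1}^2+a^2)$, and Gaussian integration by parts applies. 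In the surviving part $\IE\bclc{\-\sigma_k^2f'_{S_{k-1},\-\rho_k'}-X_kf_{S_{k-1},\-\rho_k'}\given\cF_{k-1}}$ I would write $\-\sigma_k^2=\sigma_k^2+(\-\sigma_k^2-\sigma_k^2)$; the piece built from $\sigma_k^2=\IE\bclc{X_k^2\given\cF_{k-1}}$ is \emph{exactly} the quantity estimated in \eq{13}, so the Taylor computation there yields the contribution $\tfrac{3\norm{h'}}{\-\rho_k^2+a^2}\IE\bclc{\abs{X_k}^3\given\cF_{k-1}}$.

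The genuinely new term, and the step I expect to be the main obstacle, is the variance-mismatch remainder $(\-\sigma_k^2-\sigma_k^2)\IE\bclc{f'_{S_{k-1},\-\rho_k'}(S_k+T_{k+1}')\given\cF_{k-1}}$, which is absent in Theorem~\ref{thm1} precisely because there the coefficient of $f'$ equals the matching conditional variance $\IE\bclc{X_k^2\given\cF_{k-1}}$. Here it arises from the discrepancy between the averaged variance $\-\sigma_k^2$ encoded in the comparison Gaussian and the true conditional second moment $\sigma_k^2$; it is extracted cleanly using the conditional independence of $X_k$ and $T_{k+1}'$ given $\cF_{k-1}$ (so that $\IE\bclc{X_k^2f'\given\cF_{k-1}}=\sigma_k^2\IE\bclc{f'\given\cF_{k-1}}$) together with the $\cF_{k-1}$-measurability of $\sigma_k^2-\-\sigma_k^2$. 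Taking absolute values and full expectations and summing then produces the $\IE\abs{\sigma_k^2-\-\sigma_k^2}$ contribution in \eq{20}, after which the $2a$ from \eq{7} and the scaling by $1/s_n$ finish the argument. The delicate point I anticipate is the \emph{sharp} control of $\IE\bclc{f'_{S_{k-1},\-\rho_k'}\given\cF_{k-1}}$: the crude supremum bound $\bnorm{f'_{S_{k-1},\-\rho_k'}}\leq\norm{h'}/\-\rho_k'$ from \eq{11} controls this remainder only to order $(\-\rho_k^2+a^2)^{-1/2}$, whereas the stated denominator $\-\rho_k^2+a^2$ asks for a further factor $\-\rho_k'^{-1}$; obtaining it forces one to rewrite $\IE\bclc{f'_{S_{k-1},\-\rho_k'}\given\cF_{k-1}}$ via \eq{10} as a difference of Gaussian expectations with the nearly equal variances $\-\rho_{k+1}^2+a^2$ and $\-\rho_k'^2$, and it is here that the bulk of the remaining care is required.
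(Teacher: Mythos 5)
Your reconstruction is exactly the paper's own argument: the paper's proof of Theorem~\ref{thm2} is a two-line sketch that replaces $\sigma_i$ by $\bar\sigma_i$ in $Z$ and $T_k$, reruns the proof of Theorem~\ref{thm1}, and records the extra term $(\bar\sigma_k^2-\sigma_k^2)f'_{S_{k-1},\bar\rho_k'}$ --- precisely the decomposition you describe, and your treatment of the telescoping sum, the Gaussian Stein identity, and the third-moment terms is correct. The problem is the step you yourself flag as ``the main obstacle'': it is a genuine gap, and the route you propose for closing it provably does not close it. Writing $f'_{s,t}(w)=g'((w-s)/t)/t$ and using that $T_{k+1}'\sim\N(0,\tau^2)$ with $\tau^2=\bar\rho_{k+1}^2+a^2$ is independent of $\cF_n$, one gets $\IE\bclc{f'_{S_{k-1},\bar\rho_k'}(S_{k-1}+T_{k+1}')\given\cF_{k-1}}=\IE g'(cY)/\bar\rho_k'$ with $c=\tau/\bar\rho_k'$ and $Y\sim\N(0,1)$. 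Your ``difference of Gaussian expectations with nearly equal variances'' is the identity $(1-c^2)\,\IE g'(cY)=\IE\~h(cY)-\IE\~h(Y)$, obtained from the Stein equation and Gaussian integration by parts; but since $\abs{\IE\~h(cY)-\IE\~h(Y)}\leq(1-c)\sqrt{2/\pi}\,\norm{\~h'}$ while $1-c^2=(1-c)(1+c)$, the factors $1-c$ cancel and you are left with $\abs{\IE g'(cY)}\leq\sqrt{2/\pi}\,\norm{\~h'}/(1+c)$ --- no gain over the supremum bound in \eq{11}. Indeed $\IE g'(cY)$ is genuinely of order $\norm{\~h'}$ (take $\~h(x)=\abs{x}$ to get $\IE g'(cY)=-\sqrt{2/\pi}/(1+c)$), so the mismatch term can only be bounded by $\IE\abs{\sigma_k^2-\bar\sigma_k^2}/(\bar\rho_k^2+a^2)^{1/2}$, not by $\IE\abs{\sigma_k^2-\bar\sigma_k^2}/(\bar\rho_k^2+a^2)$.

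This is not a defect you could have repaired, because the inequality \eq{20} as printed appears to be false. Let $\eps_1,\dots,\eps_n$ be i.i.d.\ Rademacher, $\cF_k=\sigma(\eps_1,\dots,\eps_k)$, $X_1=\eps_1$ and $X_k=\sqrt{2}\,\eps_k\I[\eps_1=1]$ for $k\geq2$. Then $\IE\clc{X_k\given\cF_{k-1}}=0$, $\bar\sigma_k^2=1$, $s_n^2=n$, $\bar\rho_k^2=n-k+1$, $\IE\abs{X_k}^3\leq\sqrt2$ and $\IE\abs{\sigma_k^2-\bar\sigma_k^2}=1$ for $k\geq 2$, so the right-hand side of \eq{20} with $a=0$ is $n^{-1/2}\sum_{k=1}^n\bigo(1)/(n-k+1)=\bigo(n^{-1/2}\log n)\tozero$; yet $S_n/s_n$ converges weakly (and, by uniform integrability, in $\dw$) to $\tfrac12\delta_0+\tfrac12\N(0,2)\neq\N(0,1)$, so $\dw\bclr{\law(S_n/s_n),\N(0,1)}$ stays bounded away from zero. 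With the corrected denominator $(\bar\rho_k^2+a^2)^{1/2}$ the mismatch sum in this example is of order $1$, which is consistent. So the honest conclusion of your argument --- and of the paper's sketch, which also only has $\norm{f'_{s,t}}\leq\norm{h'}/t$ at its disposal --- is \eq{20} with $\IE\abs{\sigma_k^2-\bar\sigma_k^2}$ divided by $(\bar\rho_k^2+a^2)^{1/2}$ rather than by $\bar\rho_k^2+a^2$; you should state and prove that version instead of searching for the missing factor.
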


\begin{proof}
The proof is analogous to that of Theorem~\ref{thm1}. Define                
$Z := \sum_{i=1}^n \bar\sigma_i Z_i$ and 
$T_k:=\sum_{i=k}^n\bar\sigma_i Z_i$, where we now use unconditional variances instead of conditional variances. Analogously to \eq{13}, we can show that
\bes{
  \IE\clc{R_k\given\cF_{k-1}}
    &= \bbbabs{\IE\bbbclc{
    \bclr{\bar\sigma^2_k-\sigma^2_k}f'_{S_{k-1},\-\rho_k'}(S_{k-1}+sX_k+T_{k+1}')\\
     &\kern4em+
    \sigma^2_k X_k\int_0^1
 f_{S_{k-1},\-\rho_k'}''(S_{k-1}+sX_k+T_{k+1}')ds \\
    &\kern4em+ X_k^3\int_0^1(1-s)
	 f_{S_{k-1},\-\rho_k'}''(S_{k-1}+sX_k+T_{k+1}')ds\given \cF_{k-1}}}.
}
The rest of the argument runs similarly to the proof of Theorem~\ref{thm1}.
\end{proof}

\section*{Acknowledgements}

I thank Dalibor Voln\'y for suggesting the problem of finding a proof of the martingale CLT via Stein's method, and Fang Xiao for encouraging me to publish these notes, which were written in~2008. The present manuscript represents a corrected and updated version of my original notes. I also thank the anonymous referee for helpful comments, which have further improved presentation and readability.

\setlength{\bibsep}{0.5ex}
\def\bibfont{\small}


\end{document}